\newcommand{\vertex}[2]{\cnode*(#2){4pt}{#1}}
\newcommand{\lbvertex}[2]{\cnode*(#2){4pt}{#1}\rput(#2){\white\tiny\textbf{\textsf{#1}}}}
\newcommand{\edge}[2]{\ncline[nodesep=0pt]{-}{#1}{#2}}
\newenvironment{proof}[1][]{\textit{Proof#1}. }{\qed\par}
\newcommand{\Laplacian}{L}
\def\ps@copyright{\let\@mkboth\@gobbletwo
  \def\@oddhead{}%
  \let\@evenhead\@oddhead
  \def\@oddfoot{\small\slshape
    \def\@tempa{0}
      \hfil\@date\/}%
  \let\@evenfoot\@oddfoot
}
\begin{document}

%---------------------------------------------------------------

\begin{frontmatter}

% Title, authors and addresses
\title{Largest Laplacian Eigenvalue and Degree Sequences of Trees}

\author[IU]{T\"urker B{\i}y{\i}ko\u{g}lu},
\ead{turker.biyikoglu@isikun.edu.tr}
\ead[url]{http://math.isikun.edu.tr/turker/}
\author[UL]{Marc Hellmuth}, and 
\ead{marc@bioinf.uni-leipzig.de}
\ead[url]{http://http://www.bioinf.uni-leipzig.de/\~{}marc/}
\author[WU]{Josef Leydold\corauthref{cor}}
\ead{Josef.Leydold@statistik.wu-wien.ac.at}
\ead[url]{http://statistik.wu-wien.ac.at/\~{}leydold/}

\address[IU]{Department of Mathematics,
  I\c{s}{\i}k University,
  \c{S}ile 34980, Istanbul, Turkey}
\address[UL]{Department of Computer Science, Bioinformatics,
  University of Leipzig, Haertelstrasse 16--18, D-04107 Leipzig, Germany}
\address[WU]{Department of Statistics and Mathematics,
  University of Economics and Business Administration,
  Augasse 2--6, A-1090 Wien, Austria}

\corauth[cor]{Corresponding author. Tel +43 1 313 36--4695. FAX +43 1 313 36--738}

%% \subjclass{*05C35 Extremal problems (graph theory),
%%   05C75 structural characterization of types of graphs,
%%   05C05 trees,
%%   05C50 graphs and matrices}

\begin{keyword}
  graph Laplacian \sep
  largest eigenvalue \sep
  spectral radius \sep
  eigenvector \sep
  tree \sep
  degree sequence \sep
  majorization
\end{keyword}

\begin{abstract}
  We investigate the structure of trees that have greatest maximum
  eigenvalue among all trees with a given degree sequence.
  We show that in such an extremal tree the degree sequence is
  non-increasing with respect to an ordering of the vertices that is
  obtained by breadth-first search. This structure is uniquely determined up to
  isomorphism. We also show that the maximum eigenvalue in such 
  classes of trees is strictly monotone with respect to majorization.
\end{abstract}

\end{frontmatter}

%---------------------------------------------------------------

\markboth{T.~B{\i}y{\i}ko\u{g}lu, M.~Hellmuth, and J.~Leydold}{%
  Largest Laplacian Eigenvalue and Degree Sequences of Trees}

%---------------------------------------------------------------

\section{Introduction}

The \emph{Laplacian} matrix $\Laplacian(G)$ of a graph $G=(V,E)$
with vertex set $V$ and edge set $E$ is given
as 
\begin{equation}
  \Laplacian(G) = D(G) - A(G)\;,
\end{equation}
where $A(G)$ denotes the adjacency matrix of $G$ and
$D(G)$ is the diagonal matrix whose entries are the vertex degrees,
i.e., $D_{vv} = d(v)$, where $d(v)$ denotes the degree of vertex $v$.
We write $\Laplacian$ for short if there is no risk of confusion.

The Laplacian $\Laplacian$ is symmetric and all its eigenvalues
are non-negative. These eigenvalues have been intensively
investigated, see e.g.\ \citep{Mohar:1997a} for a comprehensive
survey. In particular the largest eigenvalue, denoted by $\lambda(G)$
throughout the paper, is of importance. In literature there exist many
bounds on the largest eigenvalue of a graph; in
\citet{Brankov;Hansen;Stevanovic:2005a} some of them are collected and
it is shown how these can be derived in a systematic way.

Here we are interested in the structure of trees which have
largest maximum eigenvalue among all trees with a given degree
sequence. We call such trees \emph{extremal trees}.
We show that for such trees the degree sequence is non-increasing with
respect to an ordering of the vertices that is obtained by breadth-first search.
We also show that the largest maximum eigenvalue in such classes of
trees is strictly monotone with respect to some partial ordering of
degree sequences.
(Similar results hold for the spectral radius of trees with given
degree sequence \citep{Biyikoglu;Leydold:2007b}.)

The paper is organized as follows:
The results of this paper are stated in
Section~\ref{sec:results}. In Section~\ref{sec:proofs} we prove these
theorems by means of a technique of rearranging graphs which has been
developed in \citep{Biyikoglu;Leydold:2006a,Biyikoglu;Leydold;Stadler:2007a}
for the problem of
minimizing the first Dirichlet eigenvalue within a class of trees. 

% --------------------------------------------------------------------------

\section{Degree Sequences and Largest Eigenvalue}
\label{sec:results}

Let $d(v)$ denote the degree of vertex $v$. We call a vertex $v$ with
$d(v)=1$ a \emph{pendant vertex} of a graph (or \emph{leaf} in case
of a tree).
Recall that a sequence $\pi=(d_0,\ldots,d_{n-1})$ of non-negative
integers is called \emph{degree sequence} if there exists a graph $G$
for which $d_0,\ldots,d_{n-1}$ are the degrees of its vertices. 
In particular, $\pi$ is a tree sequence, i.e.\ a degree sequence of
some tree, if and only if every $d_i>0$ and $\sum_{i=0}^{n-1}
d_i=2\,(n-1)$. We refer the reader to \citep{Melnikov;etal:1994a} for
relevant background on degree sequences.
We introduce the following class for which we can characterize
extremal graphs with respect to the maximum eigenvalue.
\begin{equation*}
  \mathcal{T}_\pi 
  = \{\text{$G$ is a tree with degree sequence $\pi$}\}\,.
\end{equation*}

For this characterization of extremal trees in $\mathcal{T}_\pi$ we
introduce an ordering of the vertices $v_0,\ldots,v_{n-1}$ of a graph
$G$ by means of breadth-first search:
Select a vertex $v_0\in G$ and create a sorted list of vertices
beginning with $v_0$; append all neighbors $v_1,\ldots,v_{d(v_0)}$ of
$v_0$ sorted by decreasing degrees; then append all neighbors of $v_1$
that are not already in this list; continue recursively with
$v_2,v_3,\ldots$ until all vertices of $G$ are processed.
In this way we build layers where 
each $v$ in layer $i$ is adjacent to some vertex $w$ in
layer $i-1$ and vertices $u$ in layer $i+1$.
We then call the vertex $w$ the \emph{parent} of $v$ and $v$ a child
of $w$.
%% Notice that one can draw these layers on
%% circles. Hence we call such an ordering \emph{spiral like ordering},
%% see \citep{Pruss:1998a}.

\begin{defn}[BFD-ordering]
  Let $G(V,E)$ be a connected graph with root $v_0$. Then a
  well-ordering $\prec$ of the vertices is called \emph{breadth-first
  search ordering with decreasing degrees} (\emph{BFD}-ordering for
  short) if the following holds for all vertices $v, w\in V$:
  \begin{enumerate}[(B1)]
  \item if $w_1\prec w_2$ then $v_1\prec v_2$ for all children $v_1$
    of $w_1$ and $v_2$ of $w_2$;
  \item if $v\prec u$, then $d(v)\geq d(u)$.
  \end{enumerate}
  We call connected graphs that have a BFD-ordering of its vertices a
  \emph{BFD-graph} (see Fig.~\ref{fig:BFD-tree} for an example).
\end{defn}

\begin{figure}[ht]
  \centering
  \psset{unit=10mm}
  \begin{pspicture}(-3.5,0.5)(2.4,-3)
    \lbvertex{0}{0,0}
    \lbvertex{1}{-2.4,-1}
    \lbvertex{2}{-0.8,-1}
    \lbvertex{3}{0.8,-1}
    \lbvertex{4}{2.4,-1}
    \lbvertex{5}{-3,-2}
    \lbvertex{6}{-2.4,-2}
    \lbvertex{7}{-1.8,-2}
    \lbvertex{8}{-1.1,-2}
    \lbvertex{9}{-0.5,-2}
    \lbvertex{10}{0.5,-2}
    \lbvertex{11}{1.1,-2}
    \lbvertex{12}{2.1,-2}
    \lbvertex{13}{2.7,-2}
    \lbvertex{14}{-3.5,-3}
    \lbvertex{15}{-3,-3}
    \lbvertex{16}{-2.4,-3}
    \lbvertex{17}{-1.8,-3}
    \lbvertex{18}{-1.1,-3}
    \edge{0}{1}
    \edge{0}{2}
    \edge{0}{3}
    \edge{0}{4}
    \edge{1}{5}
    \edge{1}{6}
    \edge{1}{7}
    \edge{2}{8}
    \edge{2}{9}
    \edge{3}{10}
    \edge{3}{11}
    \edge{4}{12}
    \edge{4}{13}
    \edge{5}{14}
    \edge{5}{15}
    \edge{6}{16}
    \edge{7}{17}
    \edge{8}{18}
  \end{pspicture}
  \caption{A BFD-tree with degree sequence $\pi=(4^2,3^4,2^3,1^{10})$}
  \label{fig:BFD-tree}
\end{figure}
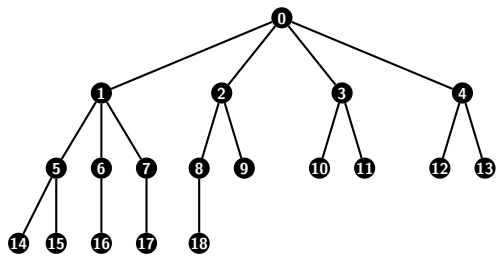

Every graph has for each of its vertices $v$ an ordering with root
$v$ that satisfies (B1). This can be found by a breadth-first
search as described above. However, not all trees have an ordering
that satisfies both (B1) and (B2); consider the tree in
Fig.~\ref{fig:non-BFD-tree}.

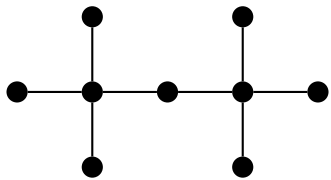
\begin{figure}[ht]
  \centering
  \psset{unit=10mm}
  \begin{pspicture}(-2,-1.3)(2,1.3)
    \vertex{0}{0,0}
    \vertex{1}{1,0}
    \vertex{2}{-1,0}
    \vertex{11}{2,0}
    \vertex{12}{1,1}
    \vertex{13}{1,-1}
    \vertex{21}{-2,0}
    \vertex{22}{-1,1}
    \vertex{23}{-1,-1}
    \edge{0}{1}
    \edge{0}{2}
    \edge{1}{11}
    \edge{1}{12}
    \edge{1}{13}
    \edge{2}{21}
    \edge{2}{22}
    \edge{2}{23}
  \end{pspicture}
  \caption{A tree with degree sequence $\pi=(4^2,2^1,1^6)$ where no
    BFD-ordering exists.}
  \label{fig:non-BFD-tree}
\end{figure}

\begin{thm}
  \label{thm:Tdegseq}
  A tree $G$ with degree sequence $\pi$ is extremal in class
  $\mathcal{T}_\pi$ if and only if it is a BFD-tree. $G$ is then
  uniquely determined up to isomorphism.
  The BFD-ordering is consistent with the corresponding eigenvector
  $f$ of $G$ in such a way that $|f(u)|>|f(v)|$ implies $u\prec v$.
\end{thm}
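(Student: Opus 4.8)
The plan is to work not with $\Laplacian$ itself but with the signless Laplacian $Q=D(G)+A(G)$. Since a tree is bipartite, conjugating $A(G)$ by the diagonal $\pm1$-matrix that flips signs on one colour class carries $\Laplacian$ to $Q$, so the two matrices are similar and $\lambda(G)$ is also the largest eigenvalue of $Q$. As $Q$ is nonnegative and irreducible (because $G$ is connected), Perron--Frobenius supplies a strictly positive eigenvector $g$ for $\lambda(G)$, unique up to scaling, and the Laplacian eigenvector $f$ is recovered from $g$ by the same sign flip, so $|f|=g$. This already recasts the last sentence of the theorem: it suffices to produce an ordering with $g(u)>g(v)\Rightarrow u\prec v$. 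Throughout I would use the Rayleigh description $\lambda(G)=\max_{h\neq 0}\frac{\sum_{uv\in E}(h(u)+h(v))^2}{\sum_{v}h(v)^2}$, attained at $g$, together with the fact that $\mathcal{T}_\pi$ is finite, so an extremal tree exists.

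The single engine behind everything is a local rearrangement that keeps a tree inside $\mathcal{T}_\pi$ while changing the numerator of the Rayleigh quotient in a controlled way. Root the tree and fix vertices $u,v$. First, exchanging a child-subtree $T_a$ of $u$ with a child-subtree $T_b$ of $v$ (for children $a,b$ whose subtrees are disjoint) preserves all degrees and replaces the edges $ua,vb$ by $ub,va$; evaluating the new graph's Rayleigh quotient at the unchanged vector $g$ alters the numerator by exactly
\begin{equation*}
  2\,(g(u)-g(v))\,(g(b)-g(a)),
\end{equation*}
while the denominator is untouched. Hence in an extremal tree this quantity is never positive, so $g(u)>g(v)$ forces $g(a)\ge g(b)$ for all such children. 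Second, when $d(u)<d(v)$ I would move the $d(v)-d(u)$ ``excess'' child-subtrees of $v$ over to $u$; this merely exchanges the degrees of $u$ and $v$, hence stays in $\mathcal{T}_\pi$, and raises the numerator by $(g(u)-g(v))\sum_i(g(u)+g(v)+2g(b_i))$, which is positive precisely when $g(u)>g(v)$. Extremality therefore also yields $g(u)>g(v)\Rightarrow d(u)\ge d(v)$. Checking that both moves genuinely produce a tree in the degenerate cases where $u$ is an ancestor of $v$ (so the transferred subtrees must avoid the one containing $u$) is a routine but necessary step.

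With these two monotonicity statements I would root $G$ at a vertex of maximal Perron value and define $\prec$ by decreasing $g$, breaking ties first by decreasing degree and then consistently down the tree. The subtree-exchange inequality delivers (B1), the degree inequality delivers (B2), and the eigenvector consistency $g(u)>g(v)\Rightarrow u\prec v$ holds by construction, since $\prec$ refines the order of decreasing $g$ and $|f|=g$. To guarantee that $\prec$ is a genuine breadth-first ordering I also need a preliminary lemma that the Perron value strictly decreases along every path from the root; this monotonicity I would establish much as in the companion papers \citep{Biyikoglu;Leydold:2006a,Biyikoglu;Leydold;Stadler:2007a}, anchored by the leaf identity $g(\ell)=g(w)/(\lambda-1)<g(w)$ at a leaf $\ell$ with neighbour $w$, valid because $\lambda>2$ for any nontrivial tree. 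This shows every extremal tree is a BFD-tree and pins down its eigenvector.

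For uniqueness I would argue that a BFD-ordering together with $\pi$ determines the tree up to isomorphism: reading the vertices in $\prec$-order and assigning to each the prescribed number of still-unused highest-degree vertices as children is forced by (B1) and (B2), so any two BFD-trees with the same degree sequence are isomorphic. The pieces then combine cleanly: an extremal tree exists and is a BFD-tree, and the BFD-tree is unique, so the unique BFD-tree \emph{is} the extremal tree; as extremality is an isomorphism invariant, every BFD-tree is extremal, which gives the ``if'' direction and the uniqueness of the extremal tree at once. The step I expect to fight hardest is the treatment of ties in $g$: when several vertices share a Perron value the strict swap inequalities say nothing, yet I must still break ties so that (B1) and (B2) hold simultaneously while $\prec$ remains a valid BFS-ordering. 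Proving that such a consistent tie-breaking always exists -- equivalently, that on each level the degrees and the attached subtrees are compatible with a single order -- together with the nested-subtree bookkeeping for the two moves, is where the real work lies.
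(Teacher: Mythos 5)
Your overall strategy is in fact the paper's own: pass to a positive representative of the top eigenvector (you do it via the signless Laplacian $Q=D+A$, the paper via Roth's sign-structure result, Prop.~\ref{thm:bipartite}), derive two monotonicity statements from extremality by edge rearrangements (your subtree exchange and subtree transfer are exactly the paper's \emph{switching} and \emph{shifting}, Lemmata~\ref{lem:switching} and \ref{lem:shifting}, and your Rayleigh-quotient computations for both are correct), order the vertices by decreasing $|f|$ with a degree tie-break, and finish with uniqueness of the BFD-tree (your uniqueness argument is the paper's Lemma~\ref{lem:BFD-unique}). The problem is that what you defer as ``where the real work lies'' --- the ties --- is precisely the substance of the paper's proof, and your plan lacks the idea that resolves it. The resolution is that the rearrangement inequalities can be made \emph{strict at equality}: if transferring subtrees from $v$ to $u$ leaves the numerator unchanged (the case $g(u)=g(v)$), then $g$ is still an optimal vector for the new tree $T'$, hence by Prop.~\ref{prop:Rayleigh-Ritz} an eigenvector of $T'$; comparing the eigenvalue equations at $u$ and $v$ in $T$ and $T'$ forces $\sum_i g(b_i)=0$, impossible since $g>0$. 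This is how the paper's Lemma~\ref{lem:shifting} gets $\lambda(T')>\lambda(T)$ under the weak hypothesis $|f(u)|\le|f(v)|$, which is exactly what kills the tie case $|f(v_k)|=|f(v_m)|$, $d(v_k)>d(v_m)$ in Lemma~\ref{lem:BFD-exists}; the remaining tie case (equal values \emph{and} equal degrees) is handled not by any inequality but by the third, ``least-neighbour'' tie-breaking rule built into the definition of $\prec$, and case (4) of that lemma. Ironically, your signless-Laplacian setup makes this strictness argument cleaner than in the paper, since positivity of the Perron vector is immediate --- but you did not notice that it closes your gap.

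Second, the preliminary lemma you propose --- that the Perron value strictly decreases along every path from the root --- is false, so that part of the plan breaks as stated. Take $P_4$, the unique (hence extremal) tree with degree sequence $(2,2,1,1)$: by symmetry the Perron vector of $Q$ takes the same value on the two interior vertices, so rooting at either of them (both maximize $g$) the value does \emph{not} strictly decrease along the edge into the other. Only non-strict monotonicity along root-to-leaf paths holds in an extremal tree, and that is a consequence of the theorem, not an available preliminary. The paper needs no such lemma: it proves that $\prec$ coincides with a breadth-first order by contradiction (Lemma~\ref{lem:BFD-exists}), strictly increasing $\lambda$ by a switch or shift whenever the two orders first disagree; note also that your ``degenerate ancestor'' configuration is its case (2), which requires switching through a \emph{child} of $v_k$ rather than through the parents, a detail your disjoint-subtrees formulation does not cover.
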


For a tree with degree sequence $\pi$ a sharp upper bound on the
largest eigenvalue can be found by computing the corresponding 
BFD-tree. Obviously finding this tree can be done 
in $O(n)$ time if the degree sequence is sorted.

We define a partial ordering on degree sequences 
$\pi=(d_0,\ldots,d_{n-1})$ and $\pi'=(d'_0,\ldots,d'_{n'-1})$ with
$n\leq n'$ and $\pi\not=\pi'$ as follows:
we write $\pi\lhd\pi'$ if and only if 
$\sum_{i=0}^j d_i\leq \sum_{i=0}^j d'_i$
for all $j=0,\ldots n-1$ (recall that the degree sequences are
non-increasing).
\begin{thm}
  \label{thm:monotone}
  Let $\pi$ and $\pi'$ be two distinct degree sequences of trees with
  $\pi\lhd\pi'$. Let $G$ and $G'$ be extremal trees in the classes
  $\mathcal{T}_\pi$ and $\mathcal{T}_{\pi'}$, respectively.
  Then we find for the corresponding maximum eigenvalues 
  $\lambda(G)<\lambda(G')$. 
\end{thm}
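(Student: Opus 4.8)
The plan is to combine the variational characterization of $\lambda$ with the structural information of Theorem~\ref{thm:Tdegseq}, and to reduce the inequality to a chain of elementary transformations of the degree sequence. Since every tree is bipartite, the Laplacian $\Laplacian(G)$ is similar to the signless Laplacian $Q(G)=D(G)+A(G)$ (conjugate by the diagonal sign matrix of the bipartition), so $\lambda(G)=\lambda_{\max}(Q(G))$ and
\[
  \lambda(G)=\max_{h\neq 0}\frac{h^\top Q(G)\,h}{h^\top h}
  =\max_{h\neq 0}\frac{\sum_{v}d(v)h(v)^2+2\sum_{xy\in E}h(x)h(y)}{\sum_v h(v)^2}\,.
\]
For connected $G$ the matrix $Q(G)$ is nonnegative and irreducible, so by Perron--Frobenius the maximum is attained only at the (up to scaling unique) strictly positive eigenvector $g$, which equals $|f|$ for the eigenvector $f$ of Theorem~\ref{thm:Tdegseq}; in particular $g$ is non-increasing along the BFD-ordering. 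The crucial simplification is that I need not analyse the extremal tree $G'$ at all: it suffices to exhibit \emph{one} tree $H\in\mathcal{T}_{\pi'}$ with $\lambda(H)>\lambda(G)$, since then $\lambda(G')=\max_{H'\in\mathcal{T}_{\pi'}}\lambda(H')\geq\lambda(H)>\lambda(G)$.

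First I would reduce $\pi\lhd\pi'$ to covering steps: I claim any two distinct tree sequences with $\pi\lhd\pi'$ are joined by a finite chain $\pi=\pi_0\lhd\pi_1\lhd\dots\lhd\pi_m=\pi'$ of tree sequences in which each step is either (a) a \emph{leaf addition}, $\pi_{i+1}=(d_0+1,d_1,\dots,d_{n-1},1)$, raising the length by one, or (b) a \emph{unit shift}, in which for some $p<q$ with $d_q\geq 2$ one passes to the (re-sorted) sequence obtained by replacing $d_p,d_q$ with $d_p+1,d_q-1$, keeping the length fixed. This is the combinatorial counterpart of the analogous statement for the adjacency spectral radius in \citep{Biyikoglu;Leydold:2007b}; I would prove it by induction on the number of vertices together with the total partial-sum discrepancy, carefully interleaving the two moves so that every intermediate sequence remains a tree sequence lying between $\pi$ and $\pi'$ in the $\lhd$-order. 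Since a premature leaf addition at the root can overshoot the partial sums of $\pi'$, the moves cannot simply be separated, and organising this chain---while guaranteeing that each unit shift has a source vertex of degree at least $2$---is the part that needs the most care and is the main obstacle of the proof; the spectral estimates below are comparatively routine.

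It then remains to show that $\lambda$ increases strictly under each elementary step, and for both I would use the Perron vector $g$ of $Q(G)$ as a test vector on a suitable $H$. For a leaf addition, let $H$ be $G$ with a new pendant vertex $\ell$ attached to the root $v_0$ (so $H\in\mathcal{T}_{\pi'}$), and take $h=g$ on $V(G)$ and $h(\ell)=0$. Only the degree of $v_0$ changes among the surviving terms, giving $h^\top Q(H)h=g^\top Q(G)g+g(v_0)^2$ and $h^\top h=g^\top g$, whence the quotient equals $\lambda(G)+g(v_0)^2/g^\top g>\lambda(G)$ and hence $\lambda(H)>\lambda(G)$. For a unit shift I realise the move inside the tree: with $u=v_p$ and $w=v_q$ (so $u\prec w$ and $g(u)\geq g(w)$ by Theorem~\ref{thm:Tdegseq}), choose a child $z$ of $w$, detach the subtree hanging at $z$ and reattach it to $u$; since every descendant of $w$ follows $w$ in the BFD-ordering, $u$ lies in the other component and the result $H$ is a tree with degree sequence $\pi'$. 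Taking $h=g$ on the unchanged vertex set, a short computation gives
\[
  h^\top Q(H)h-g^\top Q(G)g
  =\bigl(g(u)-g(w)\bigr)\bigl(g(u)+g(w)+2g(z)\bigr)\geq 0 ,
\]
so the Rayleigh quotient at $h$ is at least $\lambda(G)$.

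The only remaining point is strictness when $g(u)=g(w)$, where this increment vanishes. Here I would observe that $g$ cannot be the Perron vector of $Q(H)$: evaluating $Q(H)g$ at $u$ yields $\lambda(G)g(u)+g(u)+g(z)>\lambda(G)g(u)$, so $g$ fails the eigenvalue equation of $Q(H)$ at $u$. As $Q(H)$ is nonnegative and irreducible, its Rayleigh quotient is strictly below $\lambda_{\max}(Q(H))=\lambda(H)$ at any vector that is not a multiple of the Perron vector; hence $\lambda(H)>h^\top Q(H)h/h^\top h\geq\lambda(G)$ in every case. Chaining these strict inequalities along $\pi=\pi_0\lhd\dots\lhd\pi_m=\pi'$ gives $\lambda(G)<\lambda(G')$, as claimed.
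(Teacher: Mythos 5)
Your spectral estimates are sound, and in fact the signless-Laplacian/Perron--Frobenius device is a clean way to handle the sign bookkeeping that the paper deals with by case analysis (your leaf-addition computation even supplies a detail the paper merely asserts, namely strict growth of $\lambda$ under attaching a pendant vertex). The genuine gap is in the combinatorial reduction, which you yourself flag as ``the main obstacle'' and then leave unproved: the claim that any $\pi\lhd\pi'$ is joined by a chain of tree sequences using only (a) leaf additions \emph{at the root} and (b) unit shifts. As stated, this claim is \emph{false}. Take $\pi=(2,1,1)$ and $\pi'=(2,2,1,1)$ (the paths $P_3$ and $P_4$); one checks $\pi\lhd\pi'$. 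Under your moves the first entry of the sequence can never decrease (a unit shift only raises earlier entries, a leaf addition raises $d_0$), and the only length-increasing move is the leaf addition, which forces the first entry up to $3$. Since $\pi'$ has length $4$ but first entry $2$, no chain of your moves can reach $\pi'$ from $\pi$ --- not even if you drop the requirement that intermediates stay $\lhd$-between $\pi$ and $\pi'$. The issue is not the \emph{timing} of the leaf addition (your worry about ``premature'' additions) but its \emph{location}: new leaves must be allowed to attach at arbitrary vertices, e.g.\ here at a vertex of degree $1$, giving $(2,1,1)\to(2,2,1,1)$ directly.

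This is exactly where the paper's organization pays off. It never proves a sequence-level decomposition lemma at all: it runs the induction on \emph{trees}, at each step locating the first index $k$ where the current tree's sequence falls short of $\pi'$, and either shifting one edge onto $v_k$ from a later vertex of degree $\geq 2$ (your unit shift, justified by the shifting lemma applied to the \emph{current} tree's eigenvector) or, when all later vertices are pendant, attaching the new leaf at $v_k$ --- not at the root. Each intermediate sequence is then automatically a tree sequence, being the degree sequence of an actual tree, and termination follows from a partial-sum count. Your proof becomes correct if you replace move (a) by ``append a $1$ and increment $d_p$ for a suitable $p$'' (your Rayleigh-quotient argument works verbatim at any vertex, since the Perron vector is positive everywhere) and then actually carry out the chain construction, e.g.\ by mimicking the paper's choice of $k$ and of the donor index; without that, the reduction on which the whole argument rests is missing.
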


We get the following well-known results as immediate corollaries.
%% (See Sect.~\ref{sec:bethe} for another application of the theorems.)

\begin{cor}
  \label{thm:Tstar}
  A tree $T$ is extremal in the class of all trees
  with $n$ vertices if and only if it is the star $K_{1,n-1}$.
\end{cor}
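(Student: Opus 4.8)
The plan is to derive this as a direct consequence of Theorems~\ref{thm:Tdegseq} and~\ref{thm:monotone}. The class of all trees on $n$ vertices decomposes into the subclasses $\mathcal{T}_\pi$ as $\pi$ ranges over all tree sequences of length $n$. By Theorem~\ref{thm:Tdegseq}, the largest value of $\lambda$ over each $\mathcal{T}_\pi$ is attained by the unique BFD-tree in that class, so the overall maximum of $\lambda$ among all trees on $n$ vertices is attained by one of these BFD-trees. Theorem~\ref{thm:monotone} tells us that this maximum value is strictly increasing along the partial order $\lhd$; hence it suffices to show that among all tree sequences on $n$ vertices there is a unique $\lhd$-maximal element, namely the degree sequence $\pi^\ast=(n-1,1,\ldots,1)$ of the star $K_{1,n-1}$.

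First I would record that $K_{1,n-1}$ is the only tree realizing $\pi^\ast$, so the extremal tree in $\mathcal{T}_{\pi^\ast}$ is the star itself. Next I would verify that $\pi\lhd\pi^\ast$ for every tree sequence $\pi=(d_0,\ldots,d_{n-1})\neq\pi^\ast$. Writing out the partial sums and using that the entries are non-increasing with each $d_i\geq 1$ and $\sum_i d_i=2(n-1)$: for $j=0$ one has $d_0\leq n-1=d_0^\ast$, since the maximum degree in a tree on $n$ vertices is $n-1$; and for $j\geq 1$ one estimates
\[
  \sum_{i=0}^{j} d_i = 2(n-1)-\sum_{i=j+1}^{n-1} d_i
     \leq 2(n-1)-(n-1-j) = (n-1)+j = \sum_{i=0}^{j} d_i^\ast ,
\]
because the $n-1-j$ remaining entries are each at least $1$. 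Thus $\pi\lhd\pi^\ast$, and $\pi^\ast$ is indeed the unique top element of $\lhd$.

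Combining these steps, Theorem~\ref{thm:monotone} yields $\lambda(G)<\lambda(K_{1,n-1})$ for the extremal tree $G$ of any tree sequence $\pi\neq\pi^\ast$, so the star attains the strict maximum of $\lambda$ and is the unique extremal tree on $n$ vertices. The only genuine content is the elementary partial-sum computation establishing the $\lhd$-maximality of $\pi^\ast$; everything else is bookkeeping on top of the two theorems. I therefore do not expect a real obstacle here, beyond handling the boundary case $j=0$ (the maximum-degree bound) separately from $j\geq 1$.
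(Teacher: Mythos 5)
Your proposal is correct and takes essentially the same route as the paper: the paper's proof likewise observes that $\pi_n=(n-1,1,\ldots,1)$ is maximal with respect to $\lhd$ among all tree sequences on $n$ vertices and then invokes Theorems~\ref{thm:Tdegseq} and~\ref{thm:monotone}. The only difference is that you spell out the elementary partial-sum verification of this $\lhd$-maximality, which the paper leaves implicit.
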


\begin{cor}[{\citep[Thm.~2.2]{Hong;Zhang:2005a}}]
  \label{thm:Tlongstar}
  A tree $G$ is extremal in the class of all trees
  with $n$ vertices and $k$ leaves if and only if it is a star with paths 
  of almost the same lengths attached to each of its $k$ leaves.
\end{cor}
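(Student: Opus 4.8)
\emph{Proof of Corollary~\ref{thm:Tlongstar} (plan).}
The plan is to combine the two main theorems. By Theorem~\ref{thm:monotone} the largest eigenvalue over the classes $\mathcal{T}_\pi$ is strictly increasing in the ordering $\lhd$, so among all trees with $n$ vertices and $k$ leaves the extremal one lives in the class $\mathcal{T}_{\pi^*}$, where $\pi^*$ is the unique $\lhd$-maximal tree sequence having exactly $k$ entries equal to $1$; by Theorem~\ref{thm:Tdegseq} that extremal tree is the (unique) BFD-tree of $\pi^*$. The corollary then reduces to two tasks: (i) determine $\pi^*$, and (ii) recognise its BFD-tree as the balanced spider described in the statement.

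For (i), first I would note that a tree with exactly $k$ leaves has exactly $k$ degree-sequence entries equal to $1$, sitting at the tail of the non-increasing sequence, while the remaining $n-k$ entries are $\geq 2$ and sum to $2(n-1)-k$. I claim $\pi^*=(k,2^{n-1-k},1^k)$. To see it is $\lhd$-maximal, fix any admissible $\pi=(d_0,\dots,d_{n-1})$ and compare the partial sums $P_j=\sum_{i=0}^{j}d_i$ with $P_j^*=\sum_{i=0}^{j}d_i^*$. For $j\geq n-k$ both tails consist solely of $1$'s, so $P_j=P_j^*$. For $0\leq j\leq n-1-k$ the entries $d_{j+1},\dots,d_{n-1}$ comprise the $k$ ones together with $n-k-j-1$ entries that are $\geq 2$; hence their sum is at least $2(n-k-j-1)+k$, which upon subtracting from the fixed total $2(n-1)$ yields $P_j\leq k+2j=P_j^*$. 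The case $j=0$ is the familiar bound $d_0\leq k$: deleting a vertex of degree $d_0$ leaves $d_0$ subtrees, each containing a distinct leaf. Thus $\pi\lhd\pi^*$ whenever $\pi\neq\pi^*$, so $\pi^*$ is the unique maximum and Theorem~\ref{thm:monotone} applies.

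For (ii), the sequence $\pi^*$ has a single vertex of degree $k$ and all other vertices of degree $1$ or $2$, so any tree realising it is a spider: the degree-$k$ vertex is the unique branch point and the $k$ paths leaving it are its legs, whose $n-1$ non-root vertices are distributed among the $k$ legs. In the BFD-tree the root must be this degree-$k$ vertex (it is $\prec$-first, so by (B2) it has maximal degree), and breadth-first search with decreasing degrees extends all legs simultaneously, one layer at a time; by (B1) a leg can terminate in a leaf only after every longer leg, which forces the leg lengths to differ by at most one. This is exactly a star $K_{1,k}$ with paths of almost equal length attached to its $k$ leaves, and it is unique up to isomorphism by Theorem~\ref{thm:Tdegseq}.

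The only genuine work is part (i): verifying that $\pi^*$ dominates every competing degree sequence in the $\lhd$-ordering. The tail-sum estimate above is the crux, since it is precisely what makes the concentration of all excess degree into a single vertex optimal; everything else is bookkeeping together with the structural identification in (ii).
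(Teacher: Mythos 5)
Your proposal is correct and follows essentially the same route as the paper: identify $\pi^*=(k,2^{n-1-k},1^k)$ as the $\lhd$-maximal tree sequence among trees with $n$ vertices and $k$ leaves, then invoke Theorems~\ref{thm:Tdegseq} and \ref{thm:monotone}. The only difference is one of detail, not of approach: the paper asserts the maximality of $\pi^*$ and the spider structure of its BFD-tree without proof, whereas you verify both (the tail-sum estimate for (i) and the layer-by-layer argument for (ii)), which is a welcome filling-in of what the paper treats as immediate.
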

\begin{proof}[ of Cor.~\ref{thm:Tstar} and \ref{thm:Tlongstar}]
  The tree sequences $\pi_n=(n-1,1,\ldots,1)$ and
  $\pi_{n,k}=(k,2,\ldots,2,1,\ldots,1)$ are maximal w.r.t.\ ordering
  $\lhd$ in the respective classes of all trees with $n$ vertices and
  all trees with $n$ vertices and $k$ pendant vertices.
  Thus the statement immediately follows from
  Theorems~\ref{thm:Tdegseq} and \ref{thm:monotone}.
\end{proof}

%---------------------------------------------------------------

\section{Proof of the Theorems}
\label{sec:proofs}

We denote the largest eigenvalue of a graph $G$ by
$\lambda(G)$. We denote the number of vertices of a graph $G$ by
$n=|V|$ and the geodesic path between two vertices $u$ and
$v$ by $P_{uv}$.

The Rayleigh quotient of the graph Laplacian $\Laplacian$
of a vector $f$ on $V$ is the fraction 
\begin{equation}
  \label{eq:rayleigh}
  \mathcal{R}_G(f)
  = \frac{\langle f,L f\rangle}{\langle f,f \rangle}
  = \frac{\sum_{uv\in E} (f(u)-f(v))^2}{\sum_{v\in V} f(v)^2}\;.
\end{equation}

By the Rayleigh-Ritz Theorem we find the following well-known
property for the spectral radius of $G$.
\begin{prop}[\citep{Horn:1990a}]
  \label{prop:Rayleigh-Ritz}
  Let $\mathcal{S}$ denote the set of unit vectors on $V$.
  Then 
  \begin{equation*}
    \lambda(G) 
    = \max_{f\in\mathcal{S}} \mathcal{R}_G(f)
    = \max_{f\in\mathcal{S}} \sum_{uv\in E} (f(u)-f(v))^2\;.
  \end{equation*}
  Moreover, if $\mathcal{R}_G(f)=\lambda(G)$ for a 
  function $f\in\mathcal{S}$, then $f$ is the Laplacian eigenvector
  corresponding to the maximum eigenvalue $\lambda(G)$ of $L(G)$.
\end{prop}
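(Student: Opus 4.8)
The plan is to reduce everything to the spectral theorem for the real symmetric matrix $\Laplacian=\Laplacian(G)$. Since $\Laplacian$ is symmetric, there is an orthonormal basis $e_0,\ldots,e_{n-1}$ of $\reals^V$ consisting of eigenvectors, with eigenvalues ordered as $\lambda(G)=\mu_0\geq\mu_1\geq\cdots\geq\mu_{n-1}\geq 0$. Both displayed equalities and the ``moreover'' statement then follow by expanding an arbitrary unit vector in this basis.

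First I would dispose of the second equality, which is purely algebraic and independent of the eigenstructure: for any $f$ on $V$ one has $\langle f,\Laplacian f\rangle=\sum_{uv\in E}(f(u)-f(v))^2$. I would verify this by writing $\Laplacian=D-A$, grouping the diagonal contribution $\sum_v d(v)\,f(v)^2$ as the sum of $f(u)^2+f(v)^2$ over the edges incident to each vertex, and matching the cross terms $-2f(u)f(v)$ coming from $A$. Because $f\in\mathcal{S}$ satisfies $\langle f,f\rangle=1$, the Rayleigh quotient $\mathcal{R}_G(f)$ equals $\langle f,\Laplacian f\rangle$ and hence equals the edge sum, so the right-hand equality comes for free once the left-hand one is settled.

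Next I would prove $\lambda(G)=\max_{f\in\mathcal{S}}\mathcal{R}_G(f)$. Writing $f=\sum_i c_i e_i$ with $\sum_i c_i^2=\langle f,f\rangle=1$, orthonormality gives $\langle f,\Laplacian f\rangle=\sum_i \mu_i c_i^2\leq\mu_0\sum_i c_i^2=\lambda(G)$, so $\mathcal{R}_G(f)\leq\lambda(G)$ for every $f\in\mathcal{S}$; the bound is attained at $f=e_0$, whence the maximum equals $\lambda(G)$. For the ``moreover'' claim I would examine the equality case of this estimate: if $\mathcal{R}_G(f)=\lambda(G)$, then $\sum_i(\mu_0-\mu_i)\,c_i^2=0$, and since every term is non-negative we conclude $c_i=0$ whenever $\mu_i<\mu_0$. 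Thus $f$ lies in the eigenspace of $\mu_0=\lambda(G)$ and is therefore an eigenvector for the largest eigenvalue.

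The argument is essentially routine, so I expect no serious difficulty; the only point demanding care is the equality case, where $\lambda(G)$ may be a repeated eigenvalue and $f$ may retain several nonzero coordinates $c_i$ while still being a genuine eigenvector, precisely because all surviving coordinates share the eigenvalue $\mu_0$. Keeping the possible multiplicity of $\lambda(G)$ in view is the one place where the reasoning must be stated carefully; the spectral decomposition and the edge-sum identity are standard.
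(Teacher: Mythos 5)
Your proof is correct and complete: the quadratic-form identity, the eigenbasis expansion bounding $\mathcal{R}_G(f)$ by $\mu_0$, and the careful treatment of the equality case (allowing for multiplicity of $\lambda(G)$) are all sound. Note that the paper itself gives no proof of this proposition --- it is quoted from Horn and Johnson --- and your argument is precisely the standard Rayleigh--Ritz spectral-decomposition proof found in that reference, so there is nothing to compare beyond saying you have reproduced the canonical argument faithfully.
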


Notice that every eigenvector $f$ corresponding to the maximum
eigenvalue must fulfill the eigenvalue equation 
\begin{equation}
  \label{eq:eigenvalue-eq}
  \Laplacian f(x) = d(x) f(x) - \sum_{yx\in E} f(y) = \lambda f(x)
  \quad\text{for every $x\in V$.}
\end{equation}

Trees are a special case of bipartite graphs. Hence the following
observation is important.
\begin{prop}[\citep{Roth:1989a}]
  \label{thm:bipartite}
  Let $G(V_1\cup V_2,E)$ be a connected graph with bipartition
  $V_1\cup V_2$ and $n=|V_1\cup V_2|$ vertices. 
  Then there is an eigenfunction $f$ corresponding to the maximum
  eigenvalue of $L(G)$, such that $f$ is positive on $V_1$ and
  negative on $V_2$.
\end{prop}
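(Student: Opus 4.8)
The plan is to start from an arbitrary eigenfunction $g$ for $\lambda(G)$ and symmetrize its sign according to the bipartition, producing a candidate that satisfies the required sign pattern weakly; I would then upgrade the weak sign condition to a strict one using connectivity. Concretely, I would set $f(v)=|g(v)|$ for $v\in V_1$ and $f(v)=-|g(v)|$ for $v\in V_2$. Since $|f(v)|=|g(v)|$ at every vertex, we have $\langle f,f\rangle=\langle g,g\rangle$, so $f$ and $g$ carry the same normalization.

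The crucial point is that bipartiteness forces every edge $uv\in E$ to join $V_1$ and $V_2$; say $u\in V_1$ and $v\in V_2$. Then $f(u)-f(v)=|g(u)|+|g(v)|$, and the elementary inequality $(|a|+|b|)^2\geq(a-b)^2$ (whose difference is $2(|ab|+ab)\geq 0$) gives $(f(u)-f(v))^2\geq(g(u)-g(v))^2$ edge by edge. Summing over $E$ and dividing by the common denominator $\langle f,f\rangle=\langle g,g\rangle$ yields $\mathcal{R}_G(f)\geq\mathcal{R}_G(g)=\lambda(G)$. Since the Rayleigh quotient is bounded above by $\lambda(G)$, Proposition~\ref{prop:Rayleigh-Ritz} forces equality, so $f$ is itself an eigenfunction for $\lambda(G)$ that is nonnegative on $V_1$ and nonpositive on $V_2$.

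It then remains to exclude zeros, and this is the step I expect to be the only delicate one, since it is where connectivity must be invoked. Suppose $f(x)=0$ for some $x$, say $x\in V_1$. Substituting into the eigenvalue equation~\eqref{eq:eigenvalue-eq} gives $\sum_{yx\in E}f(y)=0$. Every neighbor $y$ of $x$ lies in $V_2$, so $f(y)\leq 0$, and a vanishing sum of nonpositive terms forces $f(y)=0$ for all neighbors $y$ of $x$. Applying the same reasoning at each such $y$ (whose neighbors lie in $V_1$ and satisfy $f\geq 0$) propagates the value zero one layer further, and so on. Because $G$ is connected, $f$ would then vanish identically, contradicting that an eigenfunction is nonzero. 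Hence $f>0$ on $V_1$ and $f<0$ on $V_2$. The genuine content of the argument is therefore twofold: recognizing that bipartiteness makes the sign flip \emph{increase} each Rayleigh term rather than merely preserve it, and combining the eigenvalue equation with connectivity to turn the resulting weak inequalities into strict ones.
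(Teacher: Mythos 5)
Your proof is correct. Note, however, that the paper itself offers no proof of this proposition at all: it is imported verbatim from Roth (1989) as a known result, so there is no internal argument to compare yours against. What you have produced is a valid, self-contained derivation using only tools the paper already states, namely Proposition~\ref{prop:Rayleigh-Ritz} and the eigenvalue equation~(\ref{eq:eigenvalue-eq}). Each step checks out: the identity $(|a|+|b|)^2-(a-b)^2 = 2(|ab|+ab)\geq 0$ does make the sign-symmetrized function $f$ attain the maximal Rayleigh quotient edge by edge (this is exactly where bipartiteness enters, since every edge crosses the bipartition), Rayleigh--Ritz then certifies $f$ as an eigenfunction, and your zero-propagation argument via the eigenvalue equation and connectivity correctly upgrades the weak sign pattern to a strict one. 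Conceptually, your argument is the variational counterpart of Roth's matrix-theoretic one: conjugating $L=D-A$ by the diagonal sign matrix $S$ of the bipartition turns it into the entrywise nonnegative-off-diagonal situation ($SLS^{-1}=D+A$ for bipartite $G$), where Perron--Frobenius supplies a positive eigenvector for the top eigenvalue; your sign-flip of $g$ and the subsequent strictness argument replay exactly that mechanism without ever invoking Perron--Frobenius, which makes your route more elementary and better matched to the toolkit this paper actually develops.
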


The main techniques for proving our theorems is \emph{rearrangement} of
edges. We need two types of rearrangement steps that we call
\emph{switching} and \emph{shifting}, resp., in the following.

\begin{lem}[\emph{Switching}]
  \label{lem:switching}
  Let $T\in \mathcal{T}_{\pi}$ and let $u_1v_1$, $u_2v_2 \in E(T)$ 
  be edges such that the path $P_{v_1v_2}$ neither contains $u_1$
  nor $u_2$.
  Then by replacing edges $u_1v_1$ and $u_2v_2$ by the respective 
  edges $u_1v_2$ and $u_2v_1$ we get a new tree $T'$ which is also
  contained in $\mathcal{T}_\pi$. 
  Furthermore for every eigenvector $f$ corresponding to the maximum
  eigenvalue $\lambda(T)$ we find 
  $\lambda(T')\geq\lambda(T)$ whenever $|f(u_1)| \geq |f(u_2)|$ and 
  $|f(v_2)| \geq |f(v_1)|$. The inequality is strict if 
  one of the latter two inequalities is strict.
\end{lem}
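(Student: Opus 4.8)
The plan is to use the Rayleigh quotient characterization from Proposition~\ref{prop:Rayleigh-Ritz} together with the fact that switching preserves the tree structure. First I would verify the combinatorial claim: since the path $P_{v_1v_2}$ avoids both $u_1$ and $u_2$, replacing the edges $u_1v_1$ and $u_2v_2$ by $u_1v_2$ and $u_2v_1$ keeps the graph connected and acyclic, so $T'$ is again a tree; moreover the multiset of degrees is unchanged because each vertex keeps the same number of incident edges, hence $T'\in\mathcal{T}_\pi$.

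For the eigenvalue comparison, I would take $f$ to be a unit eigenvector of $L(T)$ corresponding to $\lambda(T)$ and simply feed the \emph{same} vector $f$ into the Rayleigh quotient for $T'$. By Proposition~\ref{prop:Rayleigh-Ritz} we have $\lambda(T')\geq\mathcal{R}_{T'}(f)$ because $\lambda(T')$ is the maximum over all unit vectors. Since $T$ and $T'$ share all edges except the two that were switched, and the denominator $\langle f,f\rangle=1$ is unchanged, the difference reduces to a comparison of the four relevant squared terms:
\begin{equation*}
  \lambda(T')-\lambda(T)
  \geq \mathcal{R}_{T'}(f)-\mathcal{R}_{T}(f)
  = \bigl[(f(u_1)-f(v_2))^2+(f(u_2)-f(v_1))^2\bigr]
    -\bigl[(f(u_1)-f(v_1))^2+(f(u_2)-f(v_2))^2\bigr].
\end{equation*}
Expanding and cancelling the squared terms, this simplifies to $2\bigl(f(u_1)-f(u_2)\bigr)\bigl(f(v_1)-f(v_2)\bigr)$.

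The main obstacle is then to show that this product is nonnegative under the stated hypotheses $|f(u_1)|\geq|f(u_2)|$ and $|f(v_2)|\geq|f(v_1)|$, which control only the absolute values rather than the signed quantities. To bridge this gap I would invoke Proposition~\ref{thm:bipartite}: since a tree is bipartite, I may assume $f$ has a definite sign on each side of the bipartition, so that the sign of $f$ at each vertex is determined by which color class it lies in. I would then argue, using the eigenvalue equation~\eqref{eq:eigenvalue-eq} or the positivity structure of the eigenvector, that the signs of the factors match up so that $\bigl(f(u_1)-f(u_2)\bigr)$ and $\bigl(f(v_1)-f(v_2)\bigr)$ have the same sign, making the product nonnegative; the product is strictly positive, yielding strict inequality, precisely when one of the two absolute-value hypotheses is strict. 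Handling these sign bookkeeping cases carefully—keeping track of how $u_1,u_2,v_1,v_2$ distribute among the two color classes—is the delicate part of the argument.
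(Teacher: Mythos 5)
Your combinatorial step and the reduction of $\mathcal{R}_{T'}(f)-\mathcal{R}_T(f)$ to the four edge terms are fine, but the plan breaks exactly at the point you call ``delicate'', and it cannot be repaired while you insist on plugging the \emph{same} vector $f$ into $\mathcal{R}_{T'}$. The bipartition forces the sign pattern of $f$, and the two edges can sit in the bipartition in two essentially different ways. If $v_1$ and $v_2$ lie in the same color class (so $u_1,u_2$ lie in the other), your argument works: with $f(v_1),f(v_2)>0>f(u_1),f(u_2)$ the hypotheses give $f(u_1)-f(u_2)\leq 0$ and $f(v_1)-f(v_2)\leq 0$, so the product is nonnegative. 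But if $v_1$ and $v_2$ lie in \emph{different} color classes, say $f(v_1)>0$, $f(u_2)>0$, $f(u_1)<0$, $f(v_2)<0$, then $f(u_1)-f(u_2)<0$ while $f(v_1)-f(v_2)>0$, and the product is strictly negative: for instance $f(v_1)=f(u_2)=0.1$, $f(u_1)=f(v_2)=-0.5$ satisfies both hypotheses yet gives $2(f(u_1)-f(u_2))(f(v_1)-f(v_2))=2(-0.6)(0.6)<0$, so $\mathcal{R}_{T'}(f)<\lambda(T)$ and you get no bound at all. The paper's proof handles this configuration by changing the test vector: deleting $u_1v_1$ and $u_2v_2$ splits $T$ into three components, and one sets $f'=f$ on the component containing $v_1$ and $v_2$ and $f'=-f$ on the other two. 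This changes nothing on the untouched edges or in the norm, but turns the critical quantity into $2(f(u_1)+f(u_2))(f(v_1)+f(v_2))$, which \emph{is} nonnegative under the hypotheses in that sign configuration. This sign-flip construction is the missing idea; no bookkeeping with the unmodified $f$ substitutes for it.

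There is a second, smaller gap: your claim that the product is strictly positive ``precisely when one of the two absolute-value hypotheses is strict'' is false. If $|f(u_1)|>|f(u_2)|$ but $|f(v_2)|=|f(v_1)|$, the factor $f(v_1)-f(v_2)$ (respectively $f(v_1)+f(v_2)$) vanishes, and the Rayleigh bound yields only $\lambda(T')\geq\lambda(T)$. The paper obtains strictness by a different mechanism: if $\lambda(T')=\lambda(T)$, then the test vector attains the maximum of the Rayleigh quotient on $T'$, so by Proposition~\ref{prop:Rayleigh-Ritz} it would be an eigenvector of $L(T')$; but the eigenvalue equation~(\ref{eq:eigenvalue-eq}) at $v_1$ (or at $u_2$), whose neighborhood changed by swapping $u_1$ for $u_2$ (resp.\ $v_2$ for $v_1$), then forces $|f(u_1)|=|f(u_2)|$ (resp.\ $|f(v_1)|=|f(v_2)|$), contradicting the assumed strict inequality. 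You need this eigenvalue-equation argument, not positivity of the product, to conclude $\lambda(T')>\lambda(T)$.
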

\begin{proof}
  Since $P_{v_1v_2}$ neither contains $u_1$ nor $u_2$ by assumption,
  $T'$ is again a tree. Since switching of two edges does not change
  degrees, $T'$ also belongs to class $\mathcal{T}_{\pi}$.
  Let $f$ be an eigenvector corresponding to the maximum eigenvalue
  $\lambda(T)$ with $||f||=1$. Without loss of generality we assume
  that $f(v_1)>0$.
  To verify the inequality we have to compute the effects of removing
  and inserting edges on the Rayleigh quotient. We have to distinguish
  between two cases: 
  \begin{enumerate}[(1)]
  \item 
    $f(v_1)$ and $f(u_2)$ have different signs. Then by
    Prop.~\ref{thm:bipartite} and our assumptions
    $0<f(v_1)\leq f(v_2)$ and $f(u_1)\leq f(u_2) <0$. Thus 
    \[
    \begin{split}
      \lambda(T') - \lambda(T) 
      &\geq \langle f,\Laplacian(T')f\rangle - \langle f,\Laplacian(T)f\rangle \\
      &= \left[(f(u_1)-f(v_2))^2 + (f(v_1)-f(u_2))^2\right] \\
      &\quad - \left[(f(v_1)-f(u_1))^2 + (f(u_2)-f(v_2))^2\right] \\
      &=2(f(u_1)-f(u_2))(f(v_1)-f(v_2)) \\
      &\geq 0\;.
    \end{split}
    \]
  \item 
    $f(v_1)$ and $f(u_2)$ have the same sign. 
    Then $0<f(v_1)\leq -f(v_2)$ and $0< f(u_2) \leq -f(u_1)$.
    We define a new function $f'$ such that $f'(x)=f(x)$ for all $x$
    that belong to the same component of $T\setminus\{v_1u_1,v_2u_2\}$
    as $v_1$ and $v_2$, and $f'(x)=-f(x)$ otherwise. Thus
    \[
    \begin{split}
      \lambda(T') - \lambda(T) 
      &\geq \langle f',\Laplacian(T')f'\rangle - \langle f,\Laplacian(T)f\rangle \\
      &= \left[(f'(u_1)-f'(v_2))^2 + (f'(v_1)-f'(u_2))^2\right] \\
      &\quad - \left[(f(v_1)-f(u_1))^2 + (f(u_2)-f(v_2))^2\right] \\
      &=2(f(u_1)+f(u_2))(f(v_1)+f(v_2)) \\
      &\geq 0\;.
    \end{split}
    \]
  \end{enumerate}
  Therefore in both cases $\lambda(T')\geq\lambda(T)$. 
  If $|f(u_1)| > |f(u_2)|$ or $|f(v_2)| > |f(v_1)|$ then the 
  eigenvalue equation~(\ref{eq:eigenvalue-eq}) would not 
  hold for $v_1$ or $u_2$. Thus $f$ (and $f'$, resp.)\ is not an
  eigenfunction corresponding to $\lambda(T')$ and thus
  $\lambda(T')>\mathcal{R}_{T'}(f')\geq \lambda(T)$ as claimed.
\end{proof}

\begin{lem}[\emph{Shifting}]
  \label{lem:shifting}
  Let $T\in \mathcal{T}_{\pi}$ and $u,v\in V(T)$.
  Assume we have edges $ux_1,\ldots,ux_k\in E(T)$ such that none of the
  $x_i$ is in $P_{uv}$. Then we get a new
  graph $T'$ by replacing all edges $ux_1,\ldots,ux_k$ by the
  respective edges $vx_1,\ldots,vx_k$. 
  If $f$ is an eigenvector with respect to $\lambda(T)$,
  then we find $\lambda(T') > \lambda(T)$ whenever 
  $|f(u)|\leq |f(v)|$.
\end{lem}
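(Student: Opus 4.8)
The plan is to prove the Shifting Lemma by computing the effect of the rearrangement on the Rayleigh quotient, using the given eigenvector $f$ of $T$ as a trial vector for $T'$ and applying the variational characterization from Proposition~\ref{prop:Rayleigh-Ritz}. The key point is that shifting the $k$ edges from $u$ to $v$ changes exactly those $k$ terms in the edge sum of the Rayleigh quotient, replacing each $(f(u)-f(x_i))^2$ by $(f(v)-f(x_i))^2$; all other terms are untouched, and the denominator $\langle f,f\rangle=1$ is unchanged since we do not alter the vector. So the whole difference $\lambda(T')-\lambda(T)\geq \mathcal{R}_{T'}(f)-\mathcal{R}_T(f)$ reduces to a single sum $\sum_{i=1}^k\bigl[(f(v)-f(x_i))^2-(f(u)-f(x_i))^2\bigr]$.

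Expanding this sum gives $\sum_{i=1}^k\bigl[f(v)^2-f(u)^2-2f(x_i)(f(v)-f(u))\bigr]$, which I would like to rewrite in a form that is manifestly nonnegative under the hypothesis $|f(u)|\leq|f(v)|$. First I would invoke Proposition~\ref{thm:bipartite} to fix the sign structure: since $T$ is bipartite we may take $f$ so that its sign is constant on each color class, and by scaling we can follow the convention (as in the Switching Lemma) that orients $f$ conveniently. The cleanest route is to split into the same two cases as the Switching Lemma according to whether $f(v)$ and the $f(x_i)$ agree in sign, and in the second case to flip the sign of $f$ on the component detached by removing the shifted edges, producing a modified trial vector $f'$ with $\|f'\|=\|f\|=1$ and the same Rayleigh value on $T$. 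This sign-flipping is exactly the device needed so that, after the substitution, the cross terms combine with $|f(u)|\leq|f(v)|$ to force each summand to be nonnegative.

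The main obstacle I anticipate is bookkeeping the signs and the connectivity so that the single modified function $f'$ simultaneously makes every one of the $k$ shifted edges contribute a nonnegative change, rather than handling them one at a time with possibly conflicting sign conventions. Because all $k$ edges share the common endpoint $u$ (moving to the common endpoint $v$), the vertices $x_1,\ldots,x_k$ may lie in different components of $T\setminus\{ux_1,\ldots,ux_k\}$, so I must be careful that the hypothesis ``none of the $x_i$ lies on $P_{uv}$'' guarantees $T'$ is still a tree in $\mathcal{T}_\pi$ and that the component containing $v$ is well-defined for the sign flip. Once the signs are arranged so that $f(v)-f(x_i)$ and $f(u)-f(x_i)$ have the right relative magnitudes, the inequality $|f(v)|\geq|f(u)|$ yields $(f(v)-f(x_i))^2\geq(f(u)-f(x_i))^2$ termwise.

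For strictness, I would argue as in the Switching Lemma: if $f$ (or $f'$) were an eigenvector of $L(T')$ for $\lambda(T')$, the eigenvalue equation~(\ref{eq:eigenvalue-eq}) would have to hold at $u$ and $v$ in $T'$, but the degrees of $u$ and $v$ have changed (vertex $v$ gained $k$ neighbors and $u$ lost them) while the values of $f$ were not adjusted accordingly, so the equation fails at these vertices. Hence $f'$ is not an eigenfunction for $T'$, and Proposition~\ref{prop:Rayleigh-Ritz} gives the strict inequality $\lambda(T')>\mathcal{R}_{T'}(f')\geq\mathcal{R}_T(f)=\lambda(T)$, completing the proof.
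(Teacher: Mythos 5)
Your proposal is correct and takes essentially the same route as the paper's proof: the Rayleigh-quotient comparison with $f$ as trial vector, the same two sign cases with the sign flip on the components of $T\setminus\{ux_1,\ldots,ux_k\}$ not containing $u$ and $v$ (the paper handles that case by reference to the Switching Lemma), and strictness via the eigenvalue equation at $u$ and $v$. Two details to tighten in a write-up: $T'$ is a tree but in general \emph{not} in $\mathcal{T}_\pi$, since the degrees of $u$ and $v$ change (the lemma does not claim this); and the eigenvalue equation does not fail merely ``because the degrees changed''---rather, assuming $\lambda(T')=\lambda(T)$ makes $f$ (resp.\ $f'$) an eigenvector of $L(T')$ by Proposition~\ref{prop:Rayleigh-Ritz}, and subtracting the equations at $u$ in $T$ and $T'$ then forces $k\,f(u)=\sum_{i=1}^k f(x_i)$, which is impossible because the sign structure from Proposition~\ref{thm:bipartite} gives $f(u)>0>f(x_i)$ (this is exactly the paper's ``$f(x_i)=0$ for all $i$, a contradiction'').
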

\begin{proof}
  Assume without loss of generality that $f(u)>0$. Then we have two
  cases:
  $f(v)$ and $f(u)$ have the same sign. Then by our assumptions
  $f(v)\geq f(u) > 0$ and $f(x_i)<0$ for all $i=1,\ldots,k$, and we
  find
  \[
  \begin{split}
    \lambda(T') - \lambda(T) 
    &\geq \langle f,\Laplacian(T')f\rangle - \langle f,\Laplacian(T)f\rangle \\
    &= \sum_{i=1}^k \left[(f(x_i)-f(v))^2 - (f(x_i)-f(u))^2\right] \\
    &=2 (f(u)-f(v)) \sum_{i=1}^k f(x_i) + k(f(v)^2-f(u)^2)\\
    &\geq 0\;.
  \end{split}
  \]
  Now if $\lambda(T')=\lambda(T)$ then $f$ also must be an eigenvector
  of $T'$ by Prop.~\ref{prop:Rayleigh-Ritz}. Thus the eigenvalue
  equation~(\ref{eq:eigenvalue-eq}) for vertex $u$ and $v$ in $T$ and
  $T'$ implies that $f(x_i)=0$ for all $i$, a contradiction.
  The second case where $f(v)$ and $f(u)$ have different signs is
  shown by means of a function $f'$ analogously to the proof of
  Lemma~\ref{lem:switching}.
\end{proof}

\begin{lem}
  \label{lem:degree}
  Let $T$ be extremal in class $\mathcal{T}_{\pi}$ and $f$ an
  eigenvector corresponding to $\lambda(T)$.
  If $|f(v)|>|f(u)|$, then $d(v)\geq d(u)$.
\end{lem}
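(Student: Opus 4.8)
The plan is to argue by contradiction using the shifting operation of Lemma~\ref{lem:shifting}, which strictly increases $\lambda$ precisely when edges are moved from a vertex of smaller eigenvector modulus to one of larger modulus. So I would assume that $|f(v)|>|f(u)|$ yet $d(v)<d(u)$, and construct from $T$ a competitor $T'\in\mathcal{T}_\pi$ with $\lambda(T')>\lambda(T)$, which contradicts the extremality of $T$.

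Set $k=d(u)-d(v)\geq 1$. The key observation is that exchanging the degrees of $u$ and $v$ leaves the degree multiset $\pi$ unchanged, so if I can realize such an exchange by a single shift I remain inside $\mathcal{T}_\pi$. First I would locate the neighbors of $u$ eligible for shifting: at most one neighbor of $u$ lies on the path $P_{uv}$ (the one pointing towards $v$), so $u$ has at least $d(u)-1\geq k$ neighbors $x_1,\dots,x_k$ with $x_i\notin P_{uv}$; in particular none of these equals or is adjacent to $v$, since otherwise $x_i$ would sit on $P_{uv}$. I would then apply Lemma~\ref{lem:shifting} to replace the edges $ux_1,\dots,ux_k$ by $vx_1,\dots,vx_k$. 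Because $|f(u)|\leq|f(v)|$, the lemma yields $\lambda(T')>\lambda(T)$ directly, with no case analysis on signs required.

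It then remains to check that $T'$ indeed lies in $\mathcal{T}_\pi$. Removing the edges $ux_i$ and re-attaching the corresponding subtrees at $v$ keeps the graph connected (the path $P_{uv}$ is untouched, so $v$ stays in the component of $u$) and preserves the edge count, so $T'$ is again a tree; and since no $x_i$ was already adjacent to $v$, no multiple edges arise. The degrees then transform as $d_{T'}(u)=d(u)-k=d(v)$ and $d_{T'}(v)=d(v)+k=d(u)$, while every other degree is unchanged, so the degree sequence of $T'$ is exactly $\pi$. This yields the desired contradiction.

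I expect the only real subtlety to be the bookkeeping that guarantees a valid, degree-preserving shift: verifying that $u$ has enough neighbors off $P_{uv}$ (which reduces to the inequality $d(v)\geq 1$, valid for any tree vertex) and that reconnecting at $v$ neither disconnects the tree nor creates a parallel edge. Once these points are settled, the strict increase furnished by Lemma~\ref{lem:shifting} does the rest.
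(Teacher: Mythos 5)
Your proposal is correct and follows essentially the same route as the paper: assume $d(v)<d(u)$, shift $k=d(u)-d(v)$ of the $d(u)-1$ edges of $u$ not on $P_{uv}$ over to $v$, note that this exchanges the degrees of $u$ and $v$ so the tree stays in $\mathcal{T}_\pi$, and invoke Lemma~\ref{lem:shifting} for the strict increase $\lambda(T')>\lambda(T)$, contradicting extremality. Your additional bookkeeping (no parallel edges, connectivity, the count $d(u)-1\geq k$) just makes explicit what the paper leaves implicit.
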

\begin{proof}
  Suppose that $|f(v)|>|f(u)|$ for some vertices $u,v\in V(T)$ but 
  $d(v) < d(u)$. Then we construct a new graph $T'\in\mathcal{T}_{\pi}$
  by shifting $k=d(u)-d(v)$ edges in $T$. For this purpose we can
  choose any $k$ of the $d(u)-1$ edges that are not contained in
  $P_{uv}$. Let $x_1u,\ldots, x_ku$ be these
  edges which are replaced by $x_1v,\ldots, x_kv$. Thus we can apply
  Lemma~\ref{lem:shifting} and obtain
  $\lambda(T')>\lambda(T)$, a contradiction to our assumption.
\end{proof}

\begin{lem}
  \label{lem:BFD-unique}
  Each class $\mathcal{T}_{\pi}$ contains a BFD-tree $T$ that is
  uniquely determined up to isomorphism.
\end{lem}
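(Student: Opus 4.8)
The plan is to establish existence and uniqueness through one explicit \emph{canonical} construction, assuming as we may that $\pi=(d_0,\ldots,d_{n-1})$ is sorted non-increasingly. First I would build a candidate greedily: create vertices $v_0,\ldots,v_{n-1}$, give $v_i$ the prescribed degree $d_i$, declare $v_0$ the root, and process the vertices in index order; when processing $v_i$ I attach to it the next so-far-unused vertices as children---$d_0$ of them for $v_0$ and $d_i-1$ for each $i\geq 1$, the remaining incident edge of $v_i$ being the one to its already-assigned parent. By construction the breadth-first layers are exactly the blocks of consecutive indices emitted by the algorithm, so (B2) holds because degrees are assigned in non-increasing order, and (B1) holds because the children of $v_i$ are appended before those of $v_{i+1}$.

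It remains to check that this process is well defined, i.e.\ that each $v_i$ has been placed as some vertex's child before it is processed. A count of the child slots gives $d_0+\sum_{i=1}^{n-1}(d_i-1)=\sum_{i=0}^{n-1}d_i-(n-1)=2(n-1)-(n-1)=n-1$, exactly the number of non-root vertices, so the slots and the vertices match up and, if the algorithm terminates, every vertex is used once. To rule out premature stalling I would note that after $v_0,\ldots,v_{k-1}$ are expanded precisely $S_{k-1}-(k-1)$ non-root vertices have been created, where $S_{k-1}=\sum_{i=0}^{k-1}d_i$; since $v_k$ is the $k$-th non-root vertex it is available in time as soon as $S_{k-1}-(k-1)\geq k$, that is $S_{k-1}\geq 2k-1$. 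Rewriting this as $\sum_{i=0}^{k-1}(d_i-2)\geq -1$ and using that $d_i-2$ is non-increasing with full sum $\sum_{i=0}^{n-1}(d_i-2)=-2$, these partial sums stay non-negative while $d_i\geq 2$ and thereafter fall by exactly one at each leaf, so they never drop below $-1$. Hence the construction yields a BFD-tree in $\mathcal{T}_\pi$.

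For uniqueness I would take an arbitrary BFD-tree $T\in\mathcal{T}_\pi$ with BFD-ordering $\prec$ and argue that its isomorphism type is forced. Property (B2) makes the degrees non-increasing along $\prec$, so the $i$-th vertex in $\prec$ has degree $d_i$. The key step is to show that the breadth-first layers are intervals of $\prec$, which I would prove by induction on $i$ in the form ``every layer-$i$ vertex $\prec$-precedes every layer-$(i+1)$ vertex''. The base case holds since the root is the $\prec$-minimum. For the step, let $a$ lie in layer $i$ and $b$ in layer $i+1$, with parents $p_a$ in layer $i-1$ and $p_b$ in layer $i$; the induction hypothesis gives $p_a\prec p_b$, whence (B1) yields $a\prec b$. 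Once the layers are $\prec$-intervals, (B1) forces the children of $v_0,v_1,\ldots$ to occupy consecutive blocks within each layer, so the parent of every vertex is determined and the index map $v_i\mapsto v_i'$ is an isomorphism between any two such trees.

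The step I expect to be the crux is exactly this layer-interval claim together with the consecutive-block assignment of children, since this is where axiom (B1) does the real work and where the ``up to isomorphism'' conclusion comes from: the only remaining ambiguity is the labelling of equal-degree vertices within a layer, and any two such labellings yield the same tree up to isomorphism. The counting argument for well-definedness, by contrast, is routine once the prefix-sum inequality $S_{k-1}\geq 2k-1$ is phrased in terms of the non-increasing sequence $d_i-2$.
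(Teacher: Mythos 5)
Your proof is correct and takes essentially the same approach as the paper: the paper likewise treats existence as a straightforward canonical construction (which you verify in detail via the prefix-sum count) and proves uniqueness by exactly your argument, namely that the map sending the $i$-th vertex in one BFD-ordering to the $i$-th vertex in another is an isomorphism because (B1) and (B2) force matching degrees and matching consecutive blocks of children within each layer. One small point of precision: your claim that the partial sums of $d_i-2$ never drop below $-1$ holds only for the prefixes you actually need, $k\le n-1$, since the full sum equals $-2$.
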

\begin{proof}
  For a given tree sequence the construction of a BFD-tree is
  straightforward.
  To show that two BFD-trees $T$ and $T'$ in class $\mathcal{T}_{\pi}$ 
  are isomorphic we use a function $\phi$ that maps the vertex $v_{i}$ in the
  $i$th position in the BFD-ordering of $T$ to the vertex $w_{i}$ in the
  $i$th position in the BFD-ordering of $T'$. By the properties (B1) and (B2)
  $\phi$ is an isomorphism, as $v_{i}$ and $w_{i}$ have the same degree and the
  images of neighbors of $v_{i}$ in the next layer are exactly the neighbors
  of $w_{i}$ in the next layer. The latter can be seen by looking on all
  vertices of $T$ in the reverse BFD-ordering. 
\end{proof}

Now let $f$ be an eigenvector corresponding to the maximum eigenvalue
$\lambda(T)$ of $T$. Then we can define an ordering $\prec$ of the vertices
of $T$ in such a way that $v_i\prec v_j$ whenever
\begin{enumerate}[(i)]
\item $|f(v_i)|>|f(v_j)|$ or
\item $|f(v_i)|=|f(v_j)|$ and $d(v_i)>d(v_j)$ or
\item $|f(v_i)|=|f(v_j)|$, $d(v_i)=d(v_j)$, and there is a neighbor
  $u_i$ of $v_i$ with $u_i\prec u_j$ for all neighbors $u_j$ of $v_j$.
\end{enumerate}
Such an ordering can always be constructed recursively
starting at a maximum $v_0$ of $|f(x)|$. If we have already enumerated
the vertices in $V_{k-1}=\{v_0,\ldots,v_{k-1}\}$ then the next vertex
$v_k$ is the maximum of $V\setminus V_{k-1}$ w.~r.~t.\ (i) and
(ii). If $v_k$ is not uniquely determined then we look at the
respective neighbors that belong to $V_{k-1}$ and select the vertex
with the least neighbor (in the ordering of $V_{k-1}$). It might
happen that this is still not uniquely determined or that there are no
such neighbors, then we are free to choose any of the qualified
vertices.

We enumerate the vertices of $T$ with respect to this ordering,
i.e., $v_i\prec v_j$ if and only if $i<j$.
In particular, $v_0$ is a maximum of $|f|$.

\begin{lem}
  \label{lem:BFD-exists}
  Let $T$ be extremal in class $\mathcal{T}_\pi$
  with corresponding eigenvector $f$. Then the order $\prec$
  defined above is a BFD-ordering.
\end{lem}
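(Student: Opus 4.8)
The plan is to verify the two defining properties (B1) and (B2) of a BFD-ordering for the ordering $\prec$ constructed above, exploiting the extremality of $T$ together with Lemmas~\ref{lem:switching}, \ref{lem:shifting}, and \ref{lem:degree}. First I would dispose of property (B2): if $v\prec u$ then by the way $\prec$ is built (clauses (i)--(iii)) we have $|f(v)|\geq|f(u)|$. When $|f(v)|>|f(u)|$, Lemma~\ref{lem:degree} immediately gives $d(v)\geq d(u)$; when $|f(v)|=|f(u)|$, clause (ii) of the construction forces $d(v)\geq d(u)$ as well. So (B2) follows directly from the construction and Lemma~\ref{lem:degree}, with essentially no extra work.

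The substance of the proof lies in establishing (B1): whenever $w_1\prec w_2$, every child $v_1$ of $w_1$ must precede every child $v_2$ of $w_2$. I would argue by contradiction, assuming there exist parents $w_1\prec w_2$ but children with $v_2\prec v_1$, i.e.\ a BFS-layering inversion. The idea is to show this configuration is incompatible with extremality by producing a rearranged tree of strictly larger eigenvalue. Since $v_2\prec v_1$ we have $|f(v_2)|\geq|f(v_1)|$, while $w_1\prec w_2$ gives $|f(w_1)|\geq|f(w_2)|$. The natural move is a switching of the two parent-child edges $w_1v_1$ and $w_2v_2$: I would want to apply Lemma~\ref{lem:switching} with the roles $u_1=w_1$, $u_2=w_2$, and the endpoints $v_1,v_2$, checking that the path $P_{v_1v_2}$ avoids both $w_1$ and $w_2$ (this is where the BFS layer structure matters, since $w_1,w_2$ sit one layer closer to the root than their children). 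The hypotheses $|f(u_1)|\geq|f(u_2)|$ and $|f(v_2)|\geq|f(v_1)|$ then hold, and the lemma yields $\lambda(T')\geq\lambda(T)$, contradicting extremality once a strict inequality is forced.

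The main obstacle, and the place I expect the argument to become delicate, is handling the \emph{equality} cases and the geometric/path condition of Lemma~\ref{lem:switching}. If all the relevant $|f|$-values coincide, the switching lemma alone only gives $\lambda(T')\geq\lambda(T)$ without strictness, so a contradiction with extremality is not immediate; instead one must argue that such a switch would violate the tie-breaking clause (iii) of the ordering, or that the resulting tree, while isometric in eigenvalue, would contradict the recursive minimality built into $\prec$. I would also need to be careful that the chosen switch actually keeps $T'$ a tree with the same degree sequence (which switching guarantees) and that the path-avoidance hypothesis genuinely holds for the inverted configuration rather than merely being assumed. My strategy here is to reduce to the ``closest'' inversion---choosing $w_1,w_2,v_1,v_2$ so that $w_1,w_2$ are as early as possible in $\prec$---so that the inductive/recursive structure of the ordering can be invoked to rule out the equality case, using Lemma~\ref{lem:degree} and clause (iii) to pin down the contradiction.

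Once (B1) and (B2) are both verified, $\prec$ is by definition a BFD-ordering, and the final clause of Theorem~\ref{thm:Tdegseq} (that $|f(u)|>|f(v)|$ implies $u\prec v$) is already baked into clause (i) of the construction, so no further work is needed for that consequence.
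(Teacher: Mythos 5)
Your overall strategy (contradiction with extremality via edge switching, with (B2) coming for free from Lemma~\ref{lem:degree} and the construction) matches the paper's, but two essential pieces are missing, and one of them makes your main step unexecutable as written. You apply Lemma~\ref{lem:switching} with $u_1=w_1$, $u_2=w_2$ (the parents) and $v_1,v_2$ the children, which requires the path $P_{v_1v_2}$ to avoid both $w_1$ and $w_2$. In a tree this \emph{never} happens: the path leaving $v_1$ either goes through its parent $w_1$, or else $v_2$ is a descendant of $v_1$ and the path's last edge is $w_2v_2$, so it contains $w_2$. The lemma must be applied with the roles swapped, i.e., the path condition is checked on $P_{w_1w_2}$ (between the parents) avoiding the two children --- and even then it can genuinely fail: one child may lie on the path between the two parents (e.g., when that child is an ancestor of the other parent). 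The paper handles exactly this obstruction as a separate case, using a \emph{different} switch that pairs the edge $w_mv_m$ with an edge $v_ku_k$ to a child $u_k$ of $v_k$; the existence of such a child is guaranteed by $d(v_k)\geq d(v_m)\geq 2$ via Lemma~\ref{lem:degree}. Your proposal has no counterpart to this case.

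The second gap is the equality case, which you correctly identify as the delicate point but do not resolve; "violate clause (iii)" and "reduce to the closest inversion" are gestures, not arguments. The paper's resolution is concrete and uses a tool absent from your plan: it compares $\prec$ with a breadth-first re-ordering $v(i)$ of the vertices and takes the \emph{least} index $k$ where they disagree, so that $(v_0,\ldots,v_{k-1},v_m)$ is already BFD; this yields $w_m\prec v_k$ and $w_m\prec w_k$. Then, when $|f(v_k)|=|f(v_m)|$ but $d(v_k)>d(v_m)$, one does not switch at all but instead \emph{shifts} $d(v_k)-d(v_m)$ children from $v_k$ to $v_m$: Lemma~\ref{lem:shifting} gives a \emph{strict} eigenvalue increase with no tie problem, and the multiset of degrees is preserved. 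When additionally $d(v_k)=d(v_m)$, no rearrangement is needed: clause (iii) of the ordering forces $w_k\prec w_m$, directly contradicting $w_m\prec w_k$. Without the least-index setup, the swapped path condition, the second switching variant, and the shifting argument, the proof does not go through.
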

\begin{proof}
  Property (B2) immediately follows from Lemma~\ref{lem:degree}.
  Let $v_0$ be the root of $T$ and create another
  ordering of its vertices by a breadth-first search where the
  children of a vertex are always sorted by their index. 
  We denote the $i$th element with respect to this ordering by
  $v(i)$. 
  We show that both orderings are equivalent, i.e.,
  $v(i)=v_i$.
  Suppose that there exists an index $k$ where this relation fails and
  choose $k$ the least index with this property. Then 
  $v(k)=v_m\succ v_k$ and 
  consequently $|f(v_k)|\geq|f(v_m)|$ and $d(v_k)\geq d(v_m)$.
  Let $w_m$ and $w_k$ be the respective parents of $v_m$ and $v_k$.
  Notice that $w_m\prec v_k$ and $w_k\succ w_m$ since
  $(v_0,\ldots,v_{k-1},v_m)$ is already a BFD-ordering by our construction. 
  We have the following cases:
  \begin{enumerate}[(1)]
  \item $|f(v_k)| > |f(v_m)|$ and the path $P_{w_m w_k}$ does not
    contain any of the two vertices $v_k$ and 
    $v_m$. Then we can replace edges $w_kv_k$ and $w_mv_m$ by $w_mv_k$
    and $w_kv_m$ and get a new tree $T'$ with $\lambda(T')>\lambda(T)$ by
    Lemma~\ref{lem:switching}.
  \item $|f(v_k)| > |f(v_m)|$ and $v_m$ is contained in $P_{w_m w_k}$.
    Then by Lemma~\ref{lem:degree}
    $d(v_k)\geq d(v_m)\geq 2$ and there exists a child $u_k$ of
    $v_k$. By construction $u_k\succ v_k\succ w_m$.
    Again we get a new tree $T'$ by replacing edges 
    $w_mv_m$, $u_kv_k$ by the edges $w_mv_k$, $u_kv_m$, with
    $\lambda(T')>\lambda(T)$.
    Notice that $v_k$ cannot be in the path $P_{w_m w_k}$.
  \item $|f(v_k)| = |f(v_m)|$ and $d(v_k)>d(v_m)$. Then we can shift
    $k=d(v_k)-d(v_m)$ children of $v_k$ and get a new tree
    $T'\in\mathcal{T}_\pi$ with $\lambda(T')>\lambda(T)$ by
    Lemma~\ref{lem:shifting}.
  \item $|f(v_k)|=|f(v_m)|$ and $d(v_k)=d(v_m)$. But then we had
    $w_k\prec w_m$, a contradiction to (iii) of our ordering.
  \end{enumerate}
  In either case we get a contradiction to our assumption that
  $T$ is already extremal.
\end{proof}

\begin{proof}[ of Theorem~\ref{thm:Tdegseq}]
  The result follows immediately from the construction of the ordering
  $\prec$ and Lemmata~\ref{lem:BFD-unique} and \ref{lem:BFD-exists}.
\end{proof}

\begin{proof}[ of Theorem~\ref{thm:monotone}]
  Let $\pi=(d_0,\dots,d_{n-1})$ and $\pi'=(d'_0,\dots,d'_{n'-1})$ be
  two tree sequences with $\pi\lhd\pi'$ and $n=n'$. 
  By Theorem~\ref{thm:Tdegseq} the maximum eigenvalue becomes the largest one for
  a tree $T$ within class $\mathcal{T}_\pi$ when $T$ is a BFD-tree.
  Again $f$ denotes the eigenvector affording $\lambda(T)$.
  We have to show that there exists a tree $T'\in\mathcal{T}_{\pi'}$
  such that $\lambda(T')>\lambda(T)$. Therefore we construct a
  sequence of trees $T=T_0\rightarrow T_1\rightarrow \dots \rightarrow
  T_{s}=T'$ by shifting edges and show that the
  $\lambda(T_j)>\lambda(T_{j-1})$ for every $j=1,\dots,s$.
  We denote the degree sequence of $T_j$ by $\pi^{(j)}$.

  For a particular step in our construction, let $k$ be the least index
  with $d'_k > d^{(j)}_k$. Let $v_k$ be the corresponding
  vertex in tree $T_j$. Since 
  $\sum_{i=0}^k d'_i > \sum_{i=0}^k d^{(j)}_i$
  and $\sum_{i=0}^{n-1} d'_i = \sum_{i=0}^{n-1} d^{(j)}_i = 2(n-1)$
  there must exist a vertex $v_l\succ v_k$ with degree $d^{(j)}_l \geq 2$.
  Thus it has a child $u_l$. By Lemma~\ref{lem:shifting} we can replace
  edge $v_lu_l$ by edge $v_ku_l$ and get a new tree $T_{j+1}$ with
  $\lambda(T_{j+1})>\lambda(T_j)$. 
  Moreover, $d^{(j+1)}_k = d^{(j)}_k+1$ and $d^{(j+1)}_l = d^{(j)}_l-1$,
  and consequently $\pi^{(j)}\lhd\pi^{(j+1)}$.
  By repeating this procedure we end up with degree sequence $\pi'$ and
  the statement follows for the case where $n'=n$.

  Now assume $n'>n$. Then we construct a sequence of trees $T_j$ by the
  same procedure. However, now it happens that we arrive at some tree
  $T_r$ where $d'_k > d^{(r)}_k$ but $d^{(r)}_l=1$ for all $v_l\succ
  v_k$, i.e., they are pendant vertices. In this case we join a new
  pendant vertex to $v_k$. Then $d^{(r+1)}_k = d^{(r)}_k+1$ and
  $|\pi^{(r+1)}|=|\pi^{(r)}|+1$ as we have added a new vertex degree of
  value $1$. Thus $\pi^{(r+1)}$ is again a tree sequence with 
  $\pi^{(r)}\lhd\pi^{(r+1)}$. 
  Moreover, $\lambda(T_{r+1})>\lambda(T_r)$ as $T_{r+1}\supset
  T_r$.
  By repeating this procedure we end up with degree sequence $\pi'$
  and the statement of Theorem~\ref{thm:monotone}.
\end{proof}

%---------------------------------------------------------------

\section{Addendum to the Proof}
\label{sec:addendum-proof}

This manuscript has been compiled while M.H.\ visited Vienna last
summer (2007). We applied methods developed in 
\citep{Biyikoglu;Leydold:2006a} and \citep{Biyikoglu;Leydold:2007b}.
Meanwhile \citet{Zhang:2007a} has published the same results. Thus we
decided to present our proof to interested readers by this technical
report. 

%---------------------------------------------------------------

%%\bibliographystyle{abbrvnat}
%%\bibliography{local}

%---------------------------------------------------------------

\end{document}